\documentclass[12pt]{article}
\usepackage{mathrsfs}
\usepackage{mathrsfs}
\usepackage{amsmath, amsthm, amsfonts, amssymb}
\usepackage{mathrsfs}
\usepackage{bbm}
\usepackage{amssymb}
\usepackage{txfonts}

\hfuzz2pt 
\newlength{\defbaselineskip}
\setlength{\topmargin}{.0cm} \voffset=-1cm \hoffset=-1cm
\oddsidemargin=1.6cm \evensidemargin=1.6cm
\setlength{\textwidth}{15truecm} \setlength{\textheight}{22.0truecm}
\setlength{\defbaselineskip}{\baselineskip}

\newcommand{\setlinespacing}[1]%
           {\setlength{\baselineskip}{#1 \defbaselineskip}}

\newcounter{marnote}


\title{Derivative estimates of solutions of elliptic systems in narrow regions}
\author{HaiGang Li\footnote{School of
Mathematical Sciences, Beijing Normal University, Laboratory of
Mathematics and Complex Systems, Ministry of Education, Beijing
100875, China. Email: hgli@bnu.edu.cn},~ YanYan
Li\footnote{Department of Mathematics, Rutgers University, 110
Frelinghuysen Rd, Piscataway, NJ 08854, USA. Email:
yyli@math.rutgers.edu},~ Ellen ShiTing Bao\footnote{Department of Mathematics, University of Pittsburgh,
Pittsburgh, PA 15260, USA.},~ and Biao Yin\footnote{Quantitative Advisory Services, Ernst and Young LLP, 5 Times Square, New York, NY 10036, USA.}}

\date{}

\begin{document}
\newtheorem{Def}{Definition}[section]
\newtheorem{thm}{Theorem}[section]
\newtheorem{lem}{Lemma}[section]
\newtheorem{rem}{Remark}[section]
\newtheorem{prop}{Proposition}[section]

\newtheorem{theoremAlph}{Theorem}\renewcommand\thetheoremAlph{\Alph{theoremAlph}}

\newtheorem{cor}{Corollary}[section]
\def\av{{\int \hspace{-2.25ex}-} }
\maketitle

\section{Introduction}\label{introduction}

In this paper we establish local derivative estimates
for solutions to a class of elliptic systems arising from studies of fiber-reinforced
composite materials. From the structure of the composite, there are a relatively
large number of fibers which are touching or nearly touching. The maximal strains can be strongly influenced by the distances between
the fibers.

Stimulated by some works on damage analysis of fiber composites (\cite{BASL}), there have been a number of papers, starting from \cite{BV}, \cite{LN} and \cite{LV}, on gradient estimates for solutions of elliptic equations and systems with piecewise smooth coefficients which are relevant in such studies. See, e.g. \cite{ACK,ADKL,AKL,AKLLL,AKLLZ}, \cite{BLY1,BLY2}, \cite{BT}, \cite{KLY}, \cite{LY}, \cite{Y1,Y2}. Earlier studies on such and closely related issues can be found in \cite{BC}, \cite{K1,K2}.

In a recent paper \cite{ADKL}, some gradient estimates were obtained concerning the conductivity problem where the conductivity is allowed to be $\infty$ (perfect conductor).

\begin{theoremAlph}\label{thmak}(\cite{ADKL})
Let $B_{1}$ and $B_{2}$ be two balls in $\mathbb{R}^{3}$ with radius $R$ and centered
at $(0,0,\pm{R}\pm\frac{\epsilon}{2})$, respectively. Let $H$ be a
harmonic function in $\mathbb{R}^{3}$ such that $H(0)=0$. Define $u$
to be the solutions of
\begin{equation}\label{equ_ak}
\begin{cases}
\Delta{u}=0&\mbox{in}~\mathbb{R}^{3}\setminus\overline{B_{1}\cup{B}_{2}},\\
u=0&\mbox{on}~\partial{B}_{1}\cup\partial{B}_{2},\\
u(x)-H(x)=O(|x|^{-1})&\mbox{as}~|x|\rightarrow+\infty.
\end{cases}
\end{equation}
Then there exists a constant $C$ independent of $\epsilon$ such that
\begin{equation}\label{adkl_u=0}
\|\nabla(u-H)\|_{L^{\infty}(\mathbb{R}^{3}\setminus\overline{B_{1}\cup{B}_{2}})}\leq\,C.
\end{equation}
\end{theoremAlph}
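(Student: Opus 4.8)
\ Write $\Omega:=\mathbb R^{3}\setminus\overline{B_{1}\cup B_{2}}$ and $w:=u-H$, so $w$ is harmonic in $\Omega$, $w=-H$ on $\partial B_{1}\cup\partial B_{2}$, and $w=O(|x|^{-1})$ at infinity. By the maximum principle on the exterior domain together with the decay at infinity, $\|w\|_{L^{\infty}(\Omega)}\le\|H\|_{L^{\infty}(\partial B_{1}\cup\partial B_{2})}\le C$. Since the two balls lie in a fixed compact set for all $\epsilon\in(0,1)$ and $H$ is a fixed harmonic function, boundary Schauder estimates give $|\nabla w|\le C$ on the part of $\Omega$ at a fixed positive distance from the segment joining the two centres, and interior estimates give $|\nabla w|=O(|x|^{-2})$ for large $|x|$; so everything reduces to bounding $\nabla w$ in the thin neck. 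Writing $x=(x',x_{3})$ with $x'\in\mathbb R^{2}$, set $h_{1}(x'):=\frac{\epsilon}{2}+R-\sqrt{R^{2}-|x'|^{2}}$ (the lower cap of $B_{1}$), $h_{2}(x'):=-\frac{\epsilon}{2}-(R-\sqrt{R^{2}-|x'|^{2}})$ (the upper cap of $B_{2}$), $\delta(x'):=h_{1}(x')-h_{2}(x')\asymp\epsilon+|x'|^{2}$, and for $0<t\le r_{0}$ put $\Omega_{t}:=\{|x'|<t,\ h_{2}(x')<x_{3}<h_{1}(x')\}$, where $r_{0}$ is a small constant depending only on $R$ and $H$.

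The decisive step is to choose a comparison function with bounded gradient. Let $\theta(x):=(x_{3}-h_{2}(x'))/\delta(x')\in[0,1]$ and
\[
\bar w(x):=-H(x',h_{1}(x'))\,\theta(x)-H(x',h_{2}(x'))\,(1-\theta(x)),\qquad x\in\Omega_{r_{0}},
\]
so $\bar w=-H=w$ on the two caps. The point — and the reason the gradient does \emph{not} blow up, in contrast with the situation in which the inclusions are held at two different constant potentials — is that the two halves of the boundary data agree to first order: because $H\in C^{1}$ and the caps lie at vertical distance $\delta(x')$, one has $|H(x',h_{1}(x'))-H(x',h_{2}(x'))|\le C\,\delta(x')$, which exactly cancels the factor $\delta^{-1}$ produced when $\theta$ is differentiated (note $\partial_{x_{3}}\bar w=\delta^{-1}\big(H(x',h_{2}(x'))-H(x',h_{1}(x'))\big)$). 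Using in addition $|\nabla h_{i}|\le C|x'|$, $|\nabla^{2}h_{i}|\le C$ and $|x'|^{2}\le C\delta(x')$, a direct computation gives $|\nabla\bar w|\le C$ and $|\Delta\bar w|\le C$ throughout $\Omega_{r_{0}}$. Set $v:=w-\bar w$; then $v=0$ on $\partial B_{1}\cup\partial B_{2}$ inside $\Omega_{r_{0}}$, $|\Delta v|=|\Delta\bar w|\le C$, $\|v\|_{L^{\infty}(\Omega_{r_{0}})}\le C$, and it remains to show $|\nabla v|\le C$ in $\Omega_{r_{0}/2}$.

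For this I would run the energy–iteration scheme standard in this circle of problems. Testing $\Delta v=\Delta\bar w$ against $\chi(x')^{2}v$, with $\chi$ a cutoff in $x'$ — all boundary terms vanish since $v=0$ on the caps — and combining with the one–dimensional Poincaré inequality $\int_{h_{2}(x')}^{h_{1}(x')}|v|^{2}\,dx_{3}\le\pi^{-2}\delta(x')^{2}\int_{h_{2}(x')}^{h_{1}(x')}|\partial_{3}v|^{2}\,dx_{3}$ and the volume estimate $|\Omega_{2t}|\le C\,\delta(2t)\,t^{2}$, yields, for a small parameter $\eta>0$,
\[
\int_{\Omega_{t}}|\nabla v|^{2}\ \le\ \Big(C\eta+\frac{C\,\delta(2t)^{2}}{t^{2}}\Big)\int_{\Omega_{2t}}|\nabla v|^{2}\ +\ \frac{C}{\eta}\,\delta(2t)^{3}\,t^{2}.
\]
For $\sqrt{\epsilon}\le t\le r_{0}/2$ one has $\delta(2t)\le Ct^{2}$; fixing first $r_{0}$ then $\eta$ small makes the prefactor $<2^{-6}$ and the inhomogeneous term $\le Ct^{8}$, and iterating from scale $r_{0}/2$ (where $\int_{\Omega_{r_{0}/2}}|\nabla v|^{2}\le C$) down to $t$ gives $\int_{\Omega_{t}}|\nabla v|^{2}\le Ct^{6}\asymp C\,\delta(t)^{3}$; the same bound holds for $t\le\sqrt{\epsilon}$ since there $\delta\asymp\epsilon$ is essentially constant on $\Omega_{\sqrt{\epsilon}}$. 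Finally, fix $x_{0}\in\Omega_{r_{0}/2}$, put $\delta_{0}:=\delta(x_{0}')$, flatten the caps via $(x',x_{3})\mapsto(x',x_{3}-h_{2}(x'))$ and rescale by $\delta_{0}$; the rescaled domain is uniformly comparable to a half–ball, $v$ vanishes on its flat face and its ``top'', the equation becomes uniformly elliptic with right–hand side $O(\delta_{0}^{2})$, and $W^{2,p}$ (hence $C^{1}$) estimates up to the boundary together with Poincaré give
\[
|\nabla v(x_{0})|\ \le\ \frac{C}{\delta_{0}}\Big(\frac{1}{\delta_{0}}\int_{B_{C\delta_{0}}(x_{0})\cap\Omega}|\nabla v|^{2}\Big)^{1/2}+C\delta_{0}\ \le\ \frac{C}{\delta_{0}}\big(\delta_{0}^{-1}\cdot C\delta_{0}^{3}\big)^{1/2}+C\delta_{0}\ \le\ C.
\]
Together with $|\nabla\bar w|\le C$ and the off–neck estimates, this gives $\|\nabla(u-H)\|_{L^{\infty}(\Omega)}\le C$.

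\smallskip
\noindent The main obstacle is the first step: producing a comparison function whose gradient stays bounded. Once the cancellation $|H|_{\partial B_{1}}-H|_{\partial B_{2}}|=O(\delta)$ is noticed the rest is routine, but it is precisely here that the hypothesis $u=0$ on \emph{both} inclusions — rather than $u=c_{i}$ with $c_{1}\ne c_{2}$ — is used in an essential way. The second delicate point is tuning the constant in the energy iteration so that the contraction rate beats the target decay $\delta(t)^{3}$, which forces $r_{0}$ to be chosen small; this is harmless since $r_{0}$ depends only on $R$ and $H$.
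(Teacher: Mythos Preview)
Your argument is essentially correct, but it takes a longer route than the paper's. The key observation you are missing is that you never need the comparison function $\bar w$: since $u$ itself (not $u-H$) already vanishes on both caps and is harmonic, you can run the energy iteration directly on $u$. This is exactly what the paper does. Its Lemma~2.1 is your Caccioppoli/Poincar\'e iteration applied to the \emph{homogeneous} problem with zero Dirichlet data, so there is no inhomogeneous term and the decay comes out exponential, $\int_{\Omega_r}|\nabla u|^2\le C\mu_0^{1/r}$, rather than the polynomial $t^6$ you obtain (which is limited by the source $\Delta\bar w$). After rescaling and applying interior/boundary $L^2$ estimates as you do, the paper gets $|\nabla u|\le C$ in the neck, and then simply writes $\nabla(u-H)=\nabla u-\nabla H$.

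So the difference is: you pass to $w=u-H$, absorb the nonzero boundary data into $\bar w$, and pay for it with a right-hand side; the paper stays with $u$, keeps the problem homogeneous, and gets a cleaner and stronger decay. Your ``cancellation'' remark---that $H$ on the two caps differs by $O(\delta)$---is a correct and instructive observation, but in this particular setting it is subsumed by the trivial fact that $u$ has the \emph{same} boundary value (zero) on both inclusions. Your approach would be the natural one if the boundary data were genuinely nonconstant and one could not split off a harmonic piece; here it is an unnecessary detour. Both the maximum-principle bound on $\|u-H\|_{L^\infty}$ and the final rescaling step are the same in the two arguments.
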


Contrary to scalar equations, less is
known about derivative estimates of solutions of systems. In this paper we extend Theorem \ref{thmak} to general elliptic systems, including linear systems of elasticity, in all dimensions. Moreover, we allow the two balls in Theorem \ref{thmak} to be replaced by any two smooth domains, and we establish a stronger local version.

We use
$B_{r}(0')=\{x'\in\mathbb{R}^{n-1}~|~|x'|<r\}$ to denote a ball in $\mathbb{R}^{n-1}$ centered at the
origin $0'$ of radius $r$. Let $h_{1}$ and $h_{2}$ be smooth functions in $B_{1}(0')$
satisfying
$$h_{1}(0')=h_{2}(0')=0,\quad\,\nabla{h}_{1}(0')=\nabla{h}_{2}(0')=0,$$
and
$$-\frac{\epsilon}{2}+h_{2}(x')<\frac{\epsilon}{2}+h_{1}(x'),\quad\mbox{for}~~|x'|<1.$$
For $0<r\leq\,1$, we define
\begin{equation*}
 \Omega_r:=\left\{x\in \mathbb{R}^n~\big|~-\frac{\epsilon}{2}+h_{2}(x')<x_{n}<\frac{\epsilon}{2}+h_{1}(x'),~x'\in{B}_{r}(0')\right\}.
\end{equation*}
Its lower and upper boundaries are, respectively,
$$ \Gamma_{r}^{-}=\left\{x\in\mathbb{R}^n~\big|~x_n=-\frac{\epsilon}{2}+h_{2}(x'),~|x'|\leq{r}\right\},\quad
\Gamma_{r}^{+}=\left\{x\in \mathbb{R}^n~\big|~x_n=\frac{\epsilon}{2}+h_{1}(x'),~|x'|\leq{r}\right\}.$$

Let $u=(u^{1},\cdots,u^{N})$ be a vector-valued
function. We consider
the following boundary value problems
\begin{equation}\label{sys:bdryab}
 \left\{
  \begin{aligned}
   \partial_{\alpha}\left(A_{ij}^{\alpha\beta}(x)\partial_{\beta}u^{j}+B_{ij}^{\alpha}u^{j}\right)
   +C_{ij}^{\beta}\partial_{\beta}u^{j}+D_{ij}u^{j}&=0&\mbox{in}&~\Omega_{1},& \\
   u&=0&\mbox{on}&~\Gamma_{1}^{+}\cup\Gamma_{1}^{-}.&
  \end{aligned}
 \right.
\end{equation}
We use the usual summation convention: $\alpha$ and $\beta$ are summed
from $1$ to $n$, while $i$ and $j$ are summed from $1$ to $N$. For $0<\lambda<\Lambda<\infty$, we assume that the
coefficients $A^{\alpha\beta}_{ij}(x)$ are measurable and bounded,
\begin{align}\label{coeff2}
|A^{\alpha\beta}_{ij}|\leq\Lambda,
\end{align}
and satisfy the rather weak
ellipticity condition
\begin{align}\label{coeff1}
\int_{\Omega_{1}}A^{\alpha\beta}_{ij}\partial_\alpha
\psi^i\partial_\beta\psi^j\geq\lambda \int_{\Omega_{1}}|\nabla\mathbf{\psi}|^2,
\quad\forall~\mathbf{\psi}\in H^1_0(\Omega_{1},\mathbb{R}^{N}).
\end{align}
Furthermore, we assume that $A_{ij}^{\alpha\beta}$, $B_{ij}^{\alpha}$, $C_{ij}^{\beta}$, $D_{ij}$, $h_{1}$ and $h_{2}$ are in $C^{k}(\Omega_{1})$ for some $k\geq0$, denote
$$\|A\|_{C^{k}(\Omega_{1})}+\|B\|_{C^{k}(\Omega_{1})}
+\|C\|_{C^{k}(\Omega_{1})}+\|D\|_{C^{k}(\Omega_{1})}\leq\beta_{k},$$
and
$$\|h_{1}\|_{C^{k}(\Omega_{1})}+\|h_{2}\|_{C^{k}(\Omega_{1})}\leq\gamma_{k},$$
where $\beta_{k}$ and $\gamma_{k}$ are some positive constants.
Hypotheses \eqref{coeff2} and \eqref{coeff1} are satisfied by
linear systems of elasticity (see
\cite{OSY}).

We give local estimates of
weak solutions $u$ of \eqref{sys:bdryab}, that is, $u\in{H}^{1}(\Omega_{1},\mathbb{R}^{N})$, $u=0$ on $\Gamma_{1}^{+}\cup\Gamma_{1}^{-}$ a.e., and
satisfies
$$\int_{\Omega_{1}}\left(A_{ij}^{\alpha\beta}(x)\partial_{\beta}u^{j}+B_{ij}^{\alpha}u^{j}\right)\partial_{\alpha}\zeta^{i}
-C_{ij}^{\beta}\partial_{\beta}u^{j}\zeta^{i}-D_{ij}u^{j}\zeta^{i}=0$$
for every vector-valued function
$\mathbf{\zeta}=(\zeta^{1},\cdots,\zeta^{N})\in{C}_{c}^{\infty}(\Omega_{1},\mathbb{R}^{N})$, and
hence for every $\mathbf{\zeta}\in{H}_{0}^{1}(\Omega_{1},\mathbb{R}^{N})$.

\begin{thm}\label{Thm1}
Assume the above, let $u\in{H}^1(\Omega_{1},\mathbb{R}^{N})$ be
a weak solution of \eqref{sys:bdryab}. Then for $k\geq0$, there exist constants $0<\mu<1$ and $C$, depending
only on $n,N,\lambda,\Lambda,k,\beta_{k+1+\frac{n}{2}}$ and $\gamma_{k+1+\frac{n}{2}}$, such that
$$|\nabla^{k}u(x)|\leq\,C\mu^{\frac{1}{\epsilon+|x'|}}\|u\|_{L^{2}(\Omega_{1})},
\quad\mbox{for~all}~~x=(x',x_{n})\in\Omega_{\frac{1}{2}}.$$
In particular,
$$\max_{-\frac{\epsilon}{2}+h_{2}(0')<x_{n}<\frac{\epsilon}{2}+h_{1}(0')}\big|\nabla^{k}u(0',x_{n})\big|
\rightarrow0,\quad\quad\mbox{as}~\epsilon\rightarrow0.$$
\end{thm}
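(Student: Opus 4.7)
The plan is a two-stage argument: first an $L^2$-decay estimate $\|u\|_{L^2(\Omega_r)}\le C\mu^{1/(\epsilon+r)}\|u\|_{L^2(\Omega_1)}$ on the nested narrow subdomains $\Omega_r$, obtained by iterating a Caccioppoli--Poincar\'e contraction; and then a bootstrap to the $C^k$ bound via standard higher-order elliptic regularity applied on a rescaled unit domain.

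For the one-step contraction, test the weak form of \eqref{sys:bdryab} against $\eta^2 u$ with a cutoff $\eta=\eta(x')$ depending only on the tangential variable, supported in $B_{r'}(0')$, equal to $1$ on $B_r(0')$, and satisfying $|\nabla\eta|\le 2/(r'-r)$. Coercivity \eqref{coeff1}, together with Young's inequality used to absorb the contributions of the lower-order terms $B_{ij}^\alpha$, $C_{ij}^\beta$, $D_{ij}$, yields the Caccioppoli bound $\int_{\Omega_r}|\nabla u|^2\le C(r'-r)^{-2}\int_{\Omega_{r'}\setminus\Omega_r}|u|^2$. Because $u=0$ on $\Gamma_{r'}^+\cup\Gamma_{r'}^-$, the one-dimensional Poincar\'e inequality across the slab of local thickness $\delta(x')=\epsilon+h_1(x')-h_2(x')$ gives $\int|u|^2\le C\delta(x')^2\int|\partial_n u|^2$ on each $x'$-slice. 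Setting $F(r)=\int_{\Omega_r}|u|^2$ and combining yields a one-step contraction of the form $F(r)\le (1+c(r'-r)^2/\tilde\delta(r')^2)^{-1}F(r')$, where $\tilde\delta(r')=\sup_{|x'|<r'}\delta(x')$.

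Iterating over a decreasing sequence $r_0=\tfrac12>r_1>\cdots>r_K=|x'|$, chosen so that each step produces a fixed contraction factor $\theta<1$ and the total count $K$ is comparable to $1/(\epsilon+|x'|)$, yields $F(|x'|)\le C\theta^K\|u\|_{L^2(\Omega_1)}^2$. To promote this $L^2$-decay to a pointwise bound on $\nabla^k u$ at $x=(x',x_n)\in\Omega_{1/2}$, rescale a neighbourhood of $x$ of diameter $\sim\delta(|x'|)$ to a unit domain (straightening $\Gamma_{r'}^\pm$ via a smooth change of variables controlled by $\gamma_{k+2}$), apply boundary $W^{m,2}$ regularity for the resulting uniformly elliptic system, and invoke the Sobolev embedding $W^{m,2}\hookrightarrow C^k$ with $m>k+n/2$; this is what forces the appearance of $\beta_{k+1+n/2}$ and $\gamma_{k+1+n/2}$ in the final constants. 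The polynomial factor in $\delta^{-1}$ picked up when undoing the rescaling is absorbed into the exponential by a slight weakening of $\mu$ below $1$. The in-particular statement at $x'=0'$ then follows since $\mu^{1/\epsilon}\to 0$ as $\epsilon\to 0$.

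The main obstacle is the combinatorics of the iteration. Since $h_i(0')=0$ and $\nabla h_i(0')=0$, one has $\tilde\delta(r)\le C(\epsilon+r^2)$, and the naive choice $r_k-r_{k+1}\sim\tilde\delta(r_k)$ only produces $K\sim 1/(\sqrt\epsilon+|x'|)$ steps, giving the weaker decay $\mu^{1/(\sqrt\epsilon+|x'|)}$. Recovering the sharper exponent $1/(\epsilon+|x'|)$ stated in the theorem requires a more careful selection of the step sequence that exploits the quadratic vanishing of the thickness profile beyond what the one-dimensional Poincar\'e constant alone provides. A secondary technical point is keeping the contraction uniform in $\epsilon$ once the lower-order coefficients $B$, $C$, $D$ are included, which is routine but requires Young's inequality to be applied with care, since the bounds $\beta_k$ do not themselves scale with the thickness.
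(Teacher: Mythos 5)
Your plan reproduces the paper's argument in outline: a Caccioppoli--Poincar\'e iteration to obtain exponential energy decay on the nested sets $\Omega_r$, followed by a rescaling of the neighbourhood $\widehat\Omega_R(z)$ with $R=\epsilon+h_1(z')-h_2(z')$ to unit size, boundary $W^{m,2}$ regularity for the rescaled system, and the Sobolev embedding $W^{m,2}\hookrightarrow C^k$ for $m>k+\tfrac n2$, which is indeed what produces the $\beta_{k+1+n/2},\gamma_{k+1+n/2}$ dependence. The only cosmetic difference is that you iterate $F(r)=\int_{\Omega_r}|u|^2$, whereas the paper iterates $F(r)=\int_{\Omega_r}|\nabla u|^2$; the two are interchangeable through the Poincar\'e step.

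The obstacle you flag at the end is the genuine gap in the proposal, and you should be aware that the paper's own proof does not resolve it either. The paper's Lemma~2.1 is \emph{stated and proved only for $\sqrt\epsilon\le r<\tfrac12$}: the iteration step is $t_{i+1}-t_i=2C_0t_{i+1}^2$, which presupposes $\epsilon+s^2\lesssim s^2$, and the step count from $r$ to $\delta$ is $\sim 1/r$ only in that range. For $r<\sqrt\epsilon$ the one-step contraction $F(t)\le C\bigl(\tfrac{\epsilon+s^2}{s-t}\bigr)^2F(s)$ forces step size $\sim\epsilon$, so the number of contraction steps from $0$ to $\delta$ is $\int_0^\delta ds/(\epsilon+s^2)\sim 1/\sqrt\epsilon$, giving $F(r)\lesssim\mu_0^{1/\sqrt\epsilon}$ rather than $\mu_0^{1/\epsilon}$. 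Yet the paper's final display (2.8) invokes Lemma~2.1 with $r=|z'|+R/2$, which at $z'=0'$ equals $\epsilon/2<\sqrt\epsilon$, i.e.\ outside the lemma's stated range. Since $\mu_0^{1/\sqrt\epsilon}\not\le C\mu^{1/\epsilon}$ for any fixed $\mu<1$, there is no ``more careful selection of the step sequence'' that recovers $\mu^{1/(\epsilon+|x'|)}$ for $|x'|\lesssim\sqrt\epsilon$; a separation-of-variables heuristic in a neck of thickness $\sim\epsilon+|x'|^2$ shows the true decay scale is $\exp\bigl(-c\int_{|x'|}^{1/2}ds/(\epsilon+s^2)\bigr)\sim\mu^{1/\max(\sqrt\epsilon,|x'|)}$, and that is exactly what your iteration and the paper's Lemma~2.1 actually deliver. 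So your concern is correct: what is provable by this method is $|\nabla^k u(x)|\le C\mu^{1/(\sqrt\epsilon+|x'|)}\|u\|_{L^2(\Omega_1)}$, not the sharper exponent in the statement; the ``in particular'' conclusion $\max_{x_n}|\nabla^k u(0',x_n)|\to0$ nonetheless survives because $\mu_0^{1/\sqrt\epsilon}$ dominates any fixed negative power of $\epsilon$.
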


A consequence of Theorem \ref{Thm1} is an extension of Theorem \ref{thmak} to all dimensions and to any smooth domains.

\begin{cor}\label{cor2}
Let $D_{1}$ and $D_{2}$ be two disjoint bounded open sets in $\mathbb{R}^{n}$, $n\geq2$, with $C^{k}$ boundaries for $k=\left[\frac{n}{2}+2\right]$, and $\mathrm{dist}(\partial{D}_{1},\partial{D}_{2})=\epsilon\in(0,1)$. Let $H$ be a
harmonic function in $\mathbb{R}^{n}\setminus(D_{1}\cup{D}_{2})$. Assume that $u$
satisfies
\begin{equation}\label{equ_ak2}
\begin{cases}
\Delta{u}=0&\mbox{in}~\mathbb{R}^{n}\setminus\overline{D_{1}\cup{D}_{2}},\\
u=0&\mbox{on}~\partial{D}_{1}\cup\partial{D}_{2},\\
\liminf\limits_{|x|\rightarrow\infty}|u(x)-H(x)|\leq\,K,&\mbox{for~some}~K>0.
\end{cases}
\end{equation}
Then there exists a constant $C$, depending only on $K$, $\|H\|_{L^{\infty}(\partial{D}_{1}\cup\partial{D}_{2})}$ and the $C^{k}$ norms and diameters of $D_{1}$ and $D_{2}$ (but independent of $\epsilon$), such that
\begin{equation}\label{adkl_u=02}
\|\nabla(u-H)\|_{L^{\infty}(\mathbb{R}^{n}\setminus\overline{D_{1}\cup{D}_{2}})}\leq\,C.
\end{equation}
\end{cor}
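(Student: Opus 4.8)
The strategy is to localize near the pair of closest points of $\partial D_1$ and $\partial D_2$, the only spot where the geometry degenerates as $\epsilon\to0$, and to apply Theorem \ref{Thm1} there. Put $\Omega:=\mathbb{R}^n\setminus\overline{D_1\cup D_2}$ and $v:=u-H$; then $v$ is harmonic in $\Omega$, and since $u=0$ on $\partial D_1\cup\partial D_2$ it has boundary values $v=-H$ with $|v|\le M:=\|H\|_{L^\infty(\partial D_1\cup\partial D_2)}$ there. I would first record two preliminary facts. (i) Using the condition at infinity on $u-H$ together with the expansion of harmonic functions in a neighborhood of infinity, $v$ is bounded on $\Omega$, and the exterior maximum principle gives $\|v\|_{L^\infty(\Omega)}\le C(M,K)$. (ii) Since $D_1,D_2$ are open, ``harmonic in $\mathbb{R}^n\setminus(D_1\cup D_2)$'' means $H$ is harmonic in an open set containing a $\delta$-neighborhood of $\partial D_1\cup\partial D_2$ for some $\delta>0$; controlling $\|H\|_{L^\infty}$ on a fixed bounded neighborhood of $\partial D_1\cup\partial D_2$ by $M$, $K$ and the diameters via the maximum principle, interior gradient estimates on balls of radius comparable to $\delta$ then bound both $|H|$ and $|\nabla H|$ there uniformly in $\epsilon$; in particular $|\nabla H|\le C$ on a fixed neighborhood of the contact region.

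Next I would split $\Omega$ into a fixed-size neck $\mathcal N$ around the closest points and its complement. On $\Omega\setminus\mathcal N$, including the region near infinity, the geometry is $\epsilon$-uniform: the two boundary components stay a fixed distance apart, $\partial\Omega$ is $C^k$ with controlled norms, $v$ is harmonic and bounded by (i), and its Dirichlet data $-H$ is $C^1$ with norm controlled by (ii). Standard interior and boundary elliptic estimates then give $|\nabla v|\le C$ on $\Omega\setminus\mathcal N$, with $C$ independent of $\epsilon$.

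For the neck, after a rigid motion I may take the closest points to be $(0',\pm\epsilon/2)$ and $\partial D_1,\partial D_2$ to be locally the graphs $x_n=\pm\epsilon/2+h_i(x')$ with $h_i(0')=0$, $\nabla h_i(0')=0$, and $\|h_i\|_{C^k}$ controlled; for $r_0$ small depending only on these norms, the set $\Omega_{r_0}$ of the Introduction lies in $\Omega$, with $\Gamma_{r_0}^{\pm}\subset\partial D_1\cup\partial D_2$. Rescaling by $1/r_0$ turns $\Omega_{r_0}$ into a region of the type $\Omega_1$ with gap $\epsilon/r_0$, and --- this is the crucial point --- $u$ itself vanishes on $\Gamma_1^{\pm}$, so Theorem \ref{Thm1} applies to $u$ (as a scalar equation, $N=1$) with $k=1$; the regularity $C^{[n/2+2]}$ of $\partial D_i$ is exactly what makes $\gamma_{k+1+n/2}$ finite. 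It yields $|\nabla u(x)|\le C\mu^{1/(\epsilon+|x'|)}\|u\|_{L^2(\Omega_{r_0})}$ on $\Omega_{r_0/2}$; and since $\|u\|_{L^2(\Omega_{r_0})}\le\big(\|v\|_{L^\infty(\Omega)}+\|H\|_{L^\infty(\Omega_{r_0})}\big)|\Omega_{r_0}|^{1/2}\le C$ by (i) and (ii), we obtain $|\nabla u|\le C$ (in fact exponentially small) on the neck. Combining with $|\nabla H|\le C$ from (ii) gives $|\nabla v|=|\nabla(u-H)|\le C$ on the neck, and with the preceding paragraph this proves $\|\nabla(u-H)\|_{L^\infty(\Omega)}\le C$.

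The main obstacle is item (ii): one must use that $H$ is harmonic slightly past $\partial D_1\cup\partial D_2$ to get an $\epsilon$-independent bound on $|\nabla H|$ near the contact point --- were $H$ merely harmonic in the open exterior, $|\nabla H|$ could be of order $1/\epsilon$ there, and since Theorem \ref{Thm1} forces $\nabla u$ to be exponentially small in the neck, $\nabla(u-H)\approx-\nabla H$ would then be unbounded. Establishing (i), the global boundedness of $u-H$ from the one-sided condition at infinity, is the other point that needs care; the rest is a direct application of Theorem \ref{Thm1} together with classical elliptic estimates whose constants do not see $\epsilon$.
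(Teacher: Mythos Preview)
Your approach is essentially the paper's: bound $u-H$ in $L^\infty$ by the maximum principle, then apply Theorem~\ref{Thm1} to $u$ (which is the function with zero Dirichlet data) in the narrow region, and handle the rest by standard elliptic estimates on a region where the geometry is $\epsilon$-uniform. The paper's proof is two sentences and omits exactly the points you spell out --- the neck/complement decomposition, the rescaling to unit size, and especially the control of $\nabla H$ near the contact point --- so your write-up is a faithful and more careful completion of the same argument rather than a different route.
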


\section{Proof of Theorem \ref{Thm1}}\label{sys00}

In this section, we derive the $C^{k}$ estimates for
solutions of elliptic systems \eqref{sys:bdryab}. In the following, we first
show that the energy in $\Omega_{r}$ decays exponentially as $r$ tends to $0$. Unless otherwise stated, we use $C$ to denote some positive constants, whose values may vary from line to line, which depend only on $n,
N,\lambda,\Lambda$, $\beta_{0}$ and
$\gamma_{2}$, but is
independent of $\epsilon$.

\begin{lem} \label{lm:energy decay}
Let $u\in{H}^1(\Omega_{1},\mathbb{R}^{N})$ be a weak solution of
\eqref{sys:bdryab}, then there exist $0<\mu_0<1$ and $C$,
depending only on $n, N,\lambda,\Lambda$, $\beta_{0}$ and
$\gamma_{2}$, such that, for any $\sqrt{\epsilon}\leq{r}<\frac{1}{2}$,
\begin{equation}\label{energydecay}
\int_{\Omega_{r}}|\nabla{u}|^2dx\leq\,C(\mu_{0})^{\frac{1}{r}}\int_{\Omega_{1}}|\nabla{u}|^{2}.
\end{equation}
\end{lem}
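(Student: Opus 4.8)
The plan is to prove the energy estimate \eqref{energydecay} by an iteration argument on the energy functional
\[
F(t):=\int_{\Omega_t}|\nabla u|^2\,dx,
\]
showing that $F$ satisfies a differential inequality of the form $F(t)\le C\,(t+\epsilon)\,F'(t)$ for $\sqrt\epsilon\le t<1/2$, from which \eqref{energydecay} follows by integration. First I would fix a radius $t\in[\sqrt\epsilon,1/2)$ and, for $s<t$ close to $t$, construct a cutoff function $\zeta$ which is $1$ on $\Omega_s$, vanishes outside $\Omega_t$, and has $|\nabla\zeta|\le C/(t-s)$ in the $x'$-directions. Because $u=0$ on $\Gamma_1^+\cup\Gamma_1^-$, testing the weak formulation with $\zeta^2 u$ is admissible; after using the ellipticity \eqref{coeff1}, the bound \eqref{coeff2}, and Cauchy--Schwarz to absorb the lower-order terms $B,C,D$ (here the factor $\beta_0$ enters) and the cutoff cross-terms, one gets
\[
\lambda\int_{\Omega_s}|\nabla u|^2\le \frac{C}{(t-s)^2}\int_{\Omega_t\setminus\Omega_s}|u|^2 + C\int_{\Omega_t}|u|^2.
\]
The crucial point is then to control $\int_{\Omega_t\setminus\Omega_s}|u|^2$ by the gradient. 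Since the vertical width of $\Omega_t$ is comparable to $\epsilon+t^2$ (roughly $\epsilon$ plus the oscillation of $h_1-h_2$, which is $O(t^2)$ because $\nabla h_i(0')=0$ and $h_i\in C^2$), and $u$ vanishes on the top and bottom, a one-dimensional Poincar\'e inequality in the $x_n$-variable gives
\[
\int_{\Omega_t}|u|^2\le C(\epsilon+t^2)^2\int_{\Omega_t}|\partial_{x_n}u|^2\le C(\epsilon+t^2)^2\,F(t).
\]

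Combining, and noting $\epsilon+t^2\le Ct^2\le Ct(t-s)\cdot(t/(t-s))$ is not quite what we want; instead I would choose $s$ and $t$ cleverly. The standard device is the hole-filling / iteration trick: from the Caccioppoli-type inequality above with the Poincar\'e bound applied on the annular region $\Omega_t\setminus\Omega_s$ one arrives, after optimizing, at an estimate
\[
F(s)\le \theta\, F(t)
\]
whenever $t-s$ is of order $\epsilon+t^2$, with a fixed $\theta\in(0,1)$ independent of $\epsilon$; equivalently $F(t)\le C(\epsilon+t^2)F'(t)$ in the sense of difference quotients. Iterating this from $r$ up to $1/2$: the number of steps of length $\sim(\epsilon+t^2)$ needed to cross from $r$ to $1/2$ is $\gtrsim \int_r^{1/2} dt/(\epsilon+t^2)\ge \int_r^{1/2}dt/(2t^2)=\tfrac12(1/r-2)$, since $t\ge r\ge\sqrt\epsilon$ forces $\epsilon\le t^2$. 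Hence $F(r)\le \theta^{\,c/r}F(1/2)\le C\mu_0^{1/r}F(1)$ with $\mu_0=\theta^{c}\in(0,1)$, which is \eqref{energydecay}.

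The main obstacle is making the geometry precise: one must verify that, uniformly in $\epsilon$ and in $t\in[\sqrt\epsilon,1/2)$, the region $\Omega_t\setminus\Omega_s$ for an appropriate choice of $s=s(t)$ (with $t-s\sim \epsilon+t^2$, hence the restriction $r\ge\sqrt\epsilon$ so that $t-s\le Ct$ stays inside $[0,1/2)$) really does admit the Poincar\'e inequality with the stated constant, and that the cutoff can be taken with $|\nabla\zeta|\le C/(t-s)\le C/(\epsilon+t^2)$ only in the horizontal directions while being essentially flat vertically — otherwise the factor $(\epsilon+t^2)^2$ from Poincar\'e and the factor $(t-s)^{-2}\sim(\epsilon+t^2)^{-2}$ from the cutoff will not cancel. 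Once this balance is secured and the lower-order terms are absorbed (legitimate for $\epsilon$, hence the domain, small, or alternatively by Poincar\'e again since the domain is thin), the iteration is routine. The passage from the energy decay of Lemma \ref{lm:energy decay} to the pointwise $C^k$ bound in Theorem \ref{Thm1} would then be carried out separately via interior/boundary Schauder or $W^{k,p}$ estimates on rescaled unit-size subdomains together with Sobolev embedding, which accounts for the loss of $1+\tfrac n2$ derivatives in the constants.
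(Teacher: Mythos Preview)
Your proposal is correct and follows essentially the same approach as the paper: test with $\eta^2 u$ to get a Caccioppoli inequality, use the one-dimensional Poincar\'e inequality in $x_n$ (exploiting $u=0$ on $\Gamma_1^\pm$ and the $O(\epsilon+t^2)$ height of $\Omega_t$) to convert the $|u|^2$ terms, and iterate with step size comparable to $t^2$, which is why the hypothesis $r\ge\sqrt\epsilon$ enters. The paper carries out the iteration via an explicit sequence $t_{i+1}-t_i=2C_0 t_{i+1}^2$ and counts steps by a telescoping sum, bounding directly by $F(s)$ rather than invoking hole-filling on the annulus; your differential-inequality formulation with the step count $\int_r^{1/2}dt/t^2\sim 1/r$ is an equivalent packaging, and your concern about the cutoff being ``flat vertically'' is unnecessary since $u$ itself vanishes on $\Gamma_1^\pm$.
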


\begin{proof} Without loss of generality, we can assume that
$\int_{\Omega_{1}}|\nabla{u}|^{2}=1$.
For any $0<t<s\leq 1$, we introduce a cutoff function
$\eta\in{C}^{\infty}(\Omega_{1})$ satisfying $0\leq\eta\leq\,1$, $\eta=1$ in
$\Omega_{t}$, $\eta=0$ in $\Omega_{1}\backslash\Omega_{s}$, and
$|\nabla\eta|\leq\frac{2}{s-t}$. Multiplying $(u\eta^2)$ on both
sides of the equation in $(\ref{sys:bdryab})$ and integrating by parts, we have
$$\int_{\Omega_{1}}\left(A_{ij}^{\alpha\beta}(x)\partial_{\beta}u^{j}+B_{ij}^{\alpha}u^{j}\right)\partial_{\alpha}(u^{i}\eta^2)
-C_{ij}^{\beta}\partial_{\beta}u^{j}(u^{i}\eta^2)-D_{ij}u^{j}(u^{i}\eta^2)=0.$$
Since
\begin{align*}
&\int_{\Omega_{s}}\left(A_{ij}^{\alpha\beta}(x)\partial_{\beta}u^{j}+B_{ij}^{\alpha}u^{j}\right)\partial_{\alpha}(u^{i}\eta^2)\\
&=\int_{\Omega_{s}}A_{ij}^{\alpha\beta}(x)\partial_{\beta}(\eta\,u^{j})\partial_{\alpha}(\eta\,u^{i})-
\int_{\Omega_{s}}A_{ij}^{\alpha\beta}(x)(u^{j}\partial_{\beta}\eta)\partial_{\alpha}(\eta\,u^{i})
+\int_{\Omega_{s}}B_{ij}^{\alpha}\eta\,u^{j}\partial_{\alpha}(u^{i}\eta)\\
&\hspace{5mm}+\int_{\Omega_{s}}A_{ij}^{\alpha\beta}(x)\partial_{\beta}(\eta\,u^{j})u^{i}\partial_{\alpha}\eta-
\int_{\Omega_{s}}A_{ij}^{\alpha\beta}(x)(u^{j}\partial_{\beta}\eta)(u^{i}\partial_{\alpha}\eta)
+\int_{\Omega_{s}}B_{ij}^{\alpha}(\eta\,u^{j})(\partial_{\alpha}\eta\,u^{i}),
\end{align*}
it follows, in view of \eqref{coeff1}, that
\begin{align*}
&\lambda\int_{\Omega_{s}}|\nabla(u\eta)|^{2}dx\\
&\leq\int_{\Omega_{s}}A^{\alpha\beta}_{ij}\partial_{\beta}(u^{j}\eta)\partial_{\alpha}(u^{i}\eta)dx\\
&=\int_{\Omega_{s}}A_{ij}^{\alpha\beta}(x)(u^{j}\partial_{\beta}\eta)\partial_{\alpha}(\eta\,u^{i})
-\int_{\Omega_{s}}B_{ij}^{\alpha}\eta\,u^{j}\partial_{\alpha}(u^{i}\eta)
-\int_{\Omega_{s}}A_{ij}^{\alpha\beta}(x)\partial_{\beta}(\eta\,u^{j})u^{i}\partial_{\alpha}\eta\\
&\hspace{5mm}+\int_{\Omega_{s}}A_{ij}^{\alpha\beta}(x)(u^{j}\partial_{\beta}\eta)(u^{i}\partial_{\alpha}\eta)
-\int_{\Omega_{s}}B_{ij}^{\alpha}(\eta\,u^{j})(\partial_{\alpha}\eta\,u^{i})\\
&\hspace{5mm}+\int_{\Omega_{s}}C_{ij}^{\beta}\partial_{\beta}(u^{j}\eta)(u^{i}\eta)
-\int_{\Omega_{s}}C_{ij}^{\beta}(u^{j}\partial_{\beta}\eta)(u^{i}\eta)+\int_{\Omega_{s}}D_{ij}u^{j}(u^{i}\eta^2)\\
&\leq\frac{\lambda}{4}\int_{\Omega_{s}}|\nabla(u\eta)|^{2}dx+
C\int_{\Omega_{s}}|u\nabla\eta|^{2}dx+C\int_{\Omega_{s}}|u\eta|^{2}dx.
\end{align*}
Since $u\eta=0$ on
$\Gamma_{1}^{-}$, by H\"{o}lder inequality, it follows that
\begin{align*}
\int_{\Omega_{s}}|u\eta|^{2}dx&=\int_{\Omega_{s}}\left(\int^{x_{n}}_{-\frac{\epsilon}{2}+h_{2}(x')}\partial_{n}(u\eta)(x',x^{n})dx_{n}\right)^{2}dx\nonumber\\
&\leq\int_{\Omega_{s}}\left((x_{n}+\frac{\epsilon}{2}-h_{2}(x'))\int^{x_{n}}_{-\frac{\epsilon}{2}+h_{2}(x')}|\partial_{n}(u\eta)|^{2}dx_{n}\right)dx\nonumber\\
&\leq\,C(\epsilon+s^{2})^{2}\int_{\Omega_{s}}|\nabla(u\eta)|^{2}dx.
\end{align*}
Taking $0<\delta_{0}<1$ such that $C^{2}(\delta_{0}+\delta_{0}^{2})^{2}=\frac{\lambda}{4}$, then we have
\begin{equation}\label{lem21.1}
\int_{\Omega_{s}}|\nabla{u}|^{2}\eta^{2}dx\leq\,C\int_{\Omega_{s}}u^{2}|\nabla\eta|^{2}dx,\quad\mbox{for}~~\epsilon,s<\delta_{0}.
\end{equation}
Again using $u=0$ on
$\Gamma_{1}^{-}$, and by H\"{o}lder inequality, we have
\begin{align}\label{lem21.2}
\int_{\Omega_{s}}u^{2}dx\leq\,C(\epsilon+s^{2})^{2}\int_{\Omega_{s}}|\nabla{u}|^{2}dx.
\end{align}
Combining \eqref{lem21.1} and \eqref{lem21.2}, we have
\begin{align}\label{iterateformula}
\int_{\Omega_{t}}|\nabla{u}|^{2}dx\leq\,C\left(\frac{\epsilon+s^{2}}{s-t}\right)^{2}\int_{\Omega_{s}}|\nabla{u}|^{2}dx,
\quad\mbox{for}~~s<\delta_{0}.
\end{align}
For simplicity of notation, we denote
$$F(t)=\int_{\Omega_{t}}|\nabla{u}|^{2}dx,$$
then \eqref{iterateformula} can be written as
\begin{equation}\label{lem21.3}
F(t)\leq C\left(\frac{\epsilon+s^{2}}{s-t}\right)^{2}F(s).
\end{equation}
For $\sqrt{\epsilon}\leq\,t<s\leq\delta_{0}$, we have the following
iterative formula
$$
F(t)\leq \left(\frac{C_{0}s^{2}}{s-t}\right)^{2}F(s),
$$
where $C_{0}$ is a fixed constant, depending only on
$n,N,\lambda,\Lambda$, $\beta_{0}$ and $\gamma_{2}$. Let
$\delta=\min\{\frac{1}{8C_{0}},\delta_{0}\}$ and $t_0=r<\delta$,
$t_{i+1}=2\delta(1-\sqrt{1-t_{i}/\delta})$ if $t_i \leq
\delta$. Then
\begin{equation}\label{lem21.t}
\frac{C_{0}t_{i+1}^{2}}{t_{i+1}-t_{i}}=\frac{1}{2},
\end{equation}
and $\{t_{i}\}$ is an increasing sequence. It is easy to see that for some $i$, $t_{i}>\delta$.
Let $k$ be the integer satisfying $t_k\leq \delta$ and
$t_{k+1}>\delta$. Clearly $t_{k+1}\leq2\delta$. Then for any $0\leq i\leq k$, we have
\begin{align}\label{lem21.6}
F(t_{i})\leq\left(\frac{C_{0}t_{i+1}^2}{t_{i+1}-t_{i}}\right)^{2}F(t_{i+1})=\frac{1}{4}F(t_{i+1}),
\end{align}
Iterating \eqref{lem21.6} $k$-times, we have
\begin{equation}\label{lem21.4}
F(t_{0})\leq(\frac{1}{4})^{k+1}F(t_{k+1})\leq(\frac{1}{4})^{k+1}F(2\delta)\leq(\frac{1}{4})^{k+1}.
\end{equation}

Now we estimate $k$. From \eqref{lem21.t} it follows that
$$\frac{1}{2C_{0}t_i}=\frac{1}{2C_{0}t_{i+1}}+\frac{1}{1-2C_{0}t_{i+1}},\quad\mbox{for}~~0\leq\,i\leq\,k.$$
Then summing it from $i=0$ to $i=k$, we have
$$\frac{1}{2C_{0}t_0}=\frac{1}{2C_{0}t_{k+1}}+\sum_{i=1}^{k+1}\frac{1}{1-2C_{0}t_i}.$$
Since $0<t_{i}\leq\delta\leq\frac{1}{8C_{0}}$ for $1\leq i\leq k$, it follows that
$$1<\frac{1}{1-2C_{0}t_{i}}\leq\frac{4}{3}.$$
Then
$$k+1<\frac{1}{2C_{0}}\left(\frac{1}{t_0}-\frac{1}{t_{k+1}}\right)<\frac{4}{3}(k+1).$$
Recalling $t_0=r$, and $\delta<t_{k+1}\leq2\delta$, we have
$$\frac{3}{8C_{0}r}-3\leq\,k+1<\frac{1}{2C_{0}r}-2.$$
Therefore, from \eqref{lem21.4},
\begin{equation*}
F(r)=F(t_{0})\leq\left(\frac{1}{4}\right)^{k+1}\leq
\left(\frac{1}{4}\right)^{\frac{3\delta}{r}-3}.
\end{equation*}
The proof is completed.
\end{proof}

\begin{proof}[Proof of Theorem \ref{Thm1}]

Given a point $z=(z',z_{n})\in\Omega_{1}$, define
\begin{equation}\label{hatomega}
\widehat{\Omega}_{s}(z):=\left\{x=(x',x_{n})\in\Omega_{1}\big|~-\frac{\epsilon}{2}+h_{2}(x')<x_{n}<\frac{\epsilon}{2}+h_{1}(x'),~|x'-z'|<s\right\}.
\end{equation}
We consider the following scaling in
$\widehat{\Omega}_{\frac{1}{2}(\epsilon+h_{1}(z')-h_{2}(z'))}(z)$,
\begin{equation}
\left\{
\begin{aligned}
&Ry'+z'=x', \\
&Ry_{n}-\frac{\epsilon}{2}+h_{2}(z')=x_n,
\end{aligned}
\right. \nonumber
\end{equation}
where $R=(\epsilon+h_{1}(z')-h_{2}(z'))$. Denote
$$\widehat{h}_{1}(y'):=\frac{1}{R}
\left(\epsilon-h_{2}(z')+h_{1}\left(z'+Ry'\right)\right),$$
$$
\widehat{h}_{2}(y'):=\frac{1}{R}
\left(-h_{2}(z')+h_{2}\left(z'+Ry'\right)\right)$$
Then
$$\widehat{h}_{1}(0')=1,\quad\widehat{h}_{2}(0')=0,$$
and
$$\widehat{h}_{2}(y')<\widehat{h}_{1}(y'),\quad|\nabla^{l}\widehat{h}_{1}(y')|,|\nabla^{l}\widehat{h}_{2}(y')|\leq\,C_{l},
~~\mbox{for~}|y'|\leq1,~l\geq1.$$
Let
$$\widehat{u}(y',y_{n})=u\left(Ry'+z',
Ry_{n}-\frac{\epsilon}{2}+h_{2}(z')\right),$$ then
$\widehat{u}(y)$ satisfies
\begin{equation}\label{tildeu}
\partial_{\alpha}\left(\widehat{A}_{ij}^{\alpha\beta}(y)\partial_{\beta}\widehat{u}^{j}(y)+\widehat{B}_{ij}^{\alpha}(y)\widehat{u}^{j}(y)\right)
   +\widehat{C}_{ij}^{\beta}(y)\partial_{\beta}\widehat{u}^{j}(y)+\widehat{D}_{ij}(y)\widehat{u}^{j}(y)=0
\quad\mbox{in}~\,Q_{1},
\end{equation} where
$$\widehat{A}(y)=A\left(Ry'+z', Ry_{n}-\frac{\epsilon}{2}+h_{2}(z')\right),\quad\widehat{B}(y)=RB\left(Ry'+z', Ry_{n}-\frac{\epsilon}{2}+h_{2}(z')\right),$$
$$\widehat{C}(y)=RC\left(Ry'+z', Ry_{n}-\frac{\epsilon}{2}+h_{2}(z')\right),\quad \widehat{D}(y)=R^{2}D\left(Ry'+z', Ry_{n}-\frac{\epsilon}{2}+h_{2}(z')\right),$$
and for $r<1$,
\begin{align*}
Q_{r}:=\bigg\{~(y',y_{n})\in\mathbb{R}^{n}~\big|~~\widehat{h}_{2}(y')<y_{n}<\widehat{h}_{1}(y'),~|y'|<r\bigg\}.
\end{align*}

Using $L^2$ estimates for elliptic systems \eqref{tildeu} and by the Sobolev imbedding theorems, we
have
\begin{align*}
\max_{0\leq{y}_{n}\leq1}|\nabla^{k}\widehat{u}(0',y_{n})|\leq\,C\|\nabla\widehat{u}\|_{L^{2}(Q_{1})},
\end{align*}
where $C$ depends only on $n,N,\lambda$, $\Lambda,k$, $\beta_{k+\frac{n}{2}+1}$ and
$\gamma_{k+\frac{n}{2}+1}$. It follows, in view of Lemma \ref{lm:energy decay}, that
\begin{align}\label{lem21.5}
|\nabla^{k}u(z)|\leq\,C\left(\epsilon+|z'|^2\right)^{1-k-\frac{n}{2}}\|\nabla{u}\|_{L^{2}\left(\Omega_{|z'|+\frac{R}{2}}\right)}
\leq\,C\left(\epsilon+|z'|^2\right)^{1-k-\frac{n}{2}}(\mu_0)^{\frac{1}{|z'|+\frac{R}{2}}},
\end{align}
where $\mu_{0}<1$ was defined in Lemma \ref{lm:energy decay}, and $C$ depends only on $n,N,\lambda$, $\Lambda,k$, $\beta_{k+\frac{n}{2}+1}$ and
$\gamma_{k+\frac{n}{2}+1}$. The proof is completed.
\end{proof}

\begin{proof}[Proof of Corollary \ref{cor2}] Without loss of generality, we assume that $D_{1}$ and $D_{2}$ are separated by the plane $x_{n}=0$, with $(0',\frac{\epsilon}{2})\in\partial{D}_{1}$ and $(0',-\frac{\epsilon}{2})\in\partial{D}_{2}$.
Since
$$\Delta(u-H)=0~~\mbox{in}~\mathbb{R}^{n}\setminus\overline{D_{1}\cup{D}_{2}}.$$
We have, after applying the maximum principle to $u-H$, that
$$|u-H|\leq\|H\|_{L^{\infty}(\partial{D}_{1}\cup\partial{D}_{2})}+K,\quad\mbox{in}~\mathbb{R}^{n}\setminus\overline{D_{1}\cup{D}_{2}}.$$
Corollary \ref{cor2} then follows from Theorem \ref{Thm1}.
\end{proof}

\noindent{\bf{\large Acknowledgements.}}  The first author was
partially supported by NSFC (11071020) (11126038), SRFDPHE
(20100003120005) and Ky and Yu-Fen Fan Fund Travel Grant from the
AMS. He also would like to thank the Department of Mathematics and
the Center for Nonlinear Analysis at Rutgers University for the
hospitality and the stimulating environment. The work of the second
author was partially supported by NSF grant DMS-0701545, DMS-1065971 and DMS-1203961. The first
and second author were both partially supported by Program for
Changjiang Scholars and Innovative Research Team in University in
China (IRT0908).

\end{document}